\titleformat{\subsection}[runin]
{\bfseries} {\thesubsection{.}}{0.15cm}{}[.]
\titleformat{\subsubsection}[runin]
{\em}{\thesubsubsection{.}}{0.15cm}{}[.]
\newtheorem{theorem}{Theorem}[section]
\newtheorem{lemma}[theorem]{Lemma}
\newtheorem{corollary}[theorem]{Corollary}
\theoremstyle{definition}
\newtheorem{definition}[theorem]{Definition}
\newtheorem{remark}[theorem]{Remark}
\numberwithin{equation}{section}
\numberwithin{figure}{section}
\newcommand\Ascr{\mathscr{A}}
\newcommand\Cscr{\mathscr{C}}
\newcommand\Lscr{\mathscr{L}}
\newcommand\Oscr{\mathscr{O}}
\renewcommand\b{\mathbb{B}}
\renewcommand\c{\mathbb{C}}
\newcommand\cp{\mathbb{CP}}
\newcommand\h{\mathbb{H}}
\newcommand\n{\mathbb{N}}
\renewcommand\r{\mathbb{R}}
\newcommand\z{\mathbb{Z}}
\newcommand\igot{\mathfrak{i}}
\renewcommand\igot{\mathfrak{i}}
\newcommand\Ygot{\mathfrak{Y}}
\renewcommand\imath{\igot}
\newcommand\hra{\hookrightarrow}
\newcommand\wt{\widetilde}
\newcommand\wh{\widehat}
\newcommand\di{\partial}
\newcommand\cM{\overline{M}}
\begin{document}

\fancyhead[LO]{Proper holomorphic Legendrian curves in $SL_2(\c)$}
\fancyhead[RE]{A.\ Alarc\'on}
\fancyhead[RO,LE]{\thepage}

\thispagestyle{empty}

\vspace*{1cm}
\begin{center}
{\bf\LARGE Proper holomorphic Legendrian curves in $SL_2(\c)$}

\vspace*{5mm}

{\large\bf Antonio Alarc\'on}
%
%
\end{center}


\vspace*{1cm}

\begin{quote}
{\small
\noindent {\bf Abstract}\hspace*{0.1cm}
In this paper we prove that every open Riemann surface properly embeds in the Special Linear group $SL_2(\c)$ as a holomorphic Legendrian curve, where $SL_2(\c)$ is endowed with its standard contact structure.
As a consequence, we derive the existence of proper, weakly complete, flat fronts in the real hyperbolic space $\h^3$ with arbitrary complex structure.

\smallskip

\noindent{\bf Keywords}\hspace*{0.1cm} Complex contact manifolds, Riemann surfaces, holomorphic Legendrian curves, flat fronts.

\smallskip

\noindent{\bf MSC (2010)}\hspace*{0.1cm} 53D10, 32B15, 32E30, 32H02, 53C42, 53A35}
\end{quote}


\section{Introduction and main results} 
\label{sec:intro}

Let $n\in\n=\{1,2,3,\ldots\}$ be a positive integer. A {\em complex contact manifold} is a complex manifold $W$ of odd dimension $2n+1\ge 3$ endowed with a {\em holomorphic contact structure} $\Lscr$. The latter is a holomorphic vector subbundle $\Lscr\subset TW$ of complex codimension one in the tangent bundle $TW$, satisfying that every point $p\in W$ admits an open neighborhood $U\subset W$ such that 
\[
	\Lscr|_U=\ker\eta
\]
for a holomorphic $1$-form $\eta$ on $U$ which satisfies
\[
	\eta\wedge(d\eta)^n=\eta\wedge d\eta\wedge \stackrel{\text{$n$ times}}{\cdots}\wedge d\eta
	\neq 0\quad \text{everywhere on $U$}.
\]
We shall write $(W,\eta)$ instead of $(W,\Lscr)$ when the defining holomorphic contact $1$-form $\eta$ is globally defined on $W$; in such case the complex contact manifold is said to be {\em strict}.

The model example of a complex contact manifold is the complex Euclidean space $\c^{2n+1}$ endowed with its {\em standard holomorphic contact form}
\begin{equation}\label{eq:contact-C2n+1}
	\eta_0=dz+\sum_{j=1}^n x_jdy_j,
\end{equation}
where $(x_1,y_1,\ldots,x_n,y_n,z)$ denote the complex coordinates on $\c^{2n+1}$. Another example of a complex contact manifold which is the focus of interest is the Special Linear group
\[
	SL_2(\c)=\left\{z=\left(
	\begin{array}{cc}
	z_{11} & z_{12}
	\\
	z_{21} & z_{22}
	\end{array}\right)\colon \det z=z_{11}z_{22}-z_{21}z_{12}=1\right\}
\]
endowed with its {\em standard holomorphic contact form}
\[
	\eta_{SL}=z_{11}dz_{22}-z_{21}dz_{12}.
\]
From now on in this paper we will just write $\c^{2n+1}$ and $SL_2(\c)$ for the complex contact manifolds $(\c^{2n+1},\eta_0)$ and $(SL_2(\c),\eta_{SL})$, respectively.

Let $(W,\Lscr)$ be a complex contact manifold and $M$ be a complex manifold. A holomorphic map $F\colon M\to W$ is said to be {\em Legendrian} if it is everywhere tangent to the contact structure:
\[
	dF_p(T_pM)\subset\Lscr_{F(p)}\quad \text{for all $p\in M$}.
\]
If $\Lscr=\ker\eta$ for a holomorphic contact $1$-form $\eta$, then $F$ is Legendrian if, and only if,
\[
	F^*\eta=0.
\]
When $M$ is an open Riemann surface, a holomorphic Legendrian map $F\colon M\to W$ is called a {\em Legendrian curve}; the same definition applies when $M$ is a {\em compact bordered Riemann surface} (i.e. a compact Riemann surface with nonempty boundary consisting of finitely many pairwise disjoint smooth Jordan curves; its interior $\mathring M=M\setminus bM$ is called a {\em bordered Riemann surface}) and $F$ is a Legendrian map of class $\Ascr^1(M)$ (i.e. of class $\Cscr^1(M)$ and holomorphic in the interior $\mathring M=M\setminus bM$).

A major problem in complex contact geometry is to determine whether a given complex contact manifold admits properly embedded holomorphic Legendrian curves with arbitrary complex structure. Thus, in the compact case, a celebrated result by Bryant from 1982 (see \cite[Theorem G]{Bryant1982JDG}) ensures that every compact Riemann surface embeds as a holomorphic Legendrian curve in the complex projective space $\cp^3$ endowed with the holomorphic contact form obtained by projectivizing the standard symplectic form of $\c^4$ (this is in fact, up to contactomorphisms, the only holomorphic contact structure in $\cp^3$; see LeBrun \cite{LeBrun1995IJM}).  In the open case, it has been a long-standing open problem, positively settled only very recently by Forstneri\v c, L\'opez, and the author, whether every open Riemann surface $M$ admits a proper holomorphic Legendrian embedding $M\hra\c^3$ (see \cite[Theorem 1.1]{AlarconForstnericLopez2016Legendrian}). In the opposite direction, Forstneri\v c \cite{Forstneric2016Legendrian} has recently proved that, for every $n\in\n$, there exists a holomorphic contact form $\eta_F$ on $\c^{2n+1}$ such that the complex contact manifold $(\c^{2n+1},\eta_F)$ is Kobayashi hyperbolic; in particular, any holomorphic $\eta_F$-Legendrian curve $\c\to\c^{2n+1}$ is constant. 
The aim of this paper is to settle the embedding problem for holomorphic Legendrian curves in $SL_2(\c)$.
%
%
\begin{theorem}\label{th:intro-SL2C}
Every open Riemann surface $M$ admits a proper holomorphic Legendrian embedding $M\hra SL_2(\c)$.
\end{theorem}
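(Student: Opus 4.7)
The plan is to follow the framework developed for $\c^3$ in \cite{AlarconForstnericLopez2016Legendrian}, adapting its analytic tools to the complex contact manifold $SL_2(\c)$. Fix a normal exhaustion $M_1 \Subset M_2 \Subset \cdots$ of $M$ by smoothly bounded, relatively compact, Runge subdomains whose closures $\overline{M_n}$ are compact bordered Riemann surfaces, together with an exhaustion $K_1 \Subset K_2 \Subset \cdots$ of $SL_2(\c)$ by compact sets. The goal is to build inductively a sequence of holomorphic Legendrian embeddings $F_n \colon \overline{M_n} \to SL_2(\c)$ of class $\Ascr^1$ satisfying: (a) $F_n$ approximates $F_{n-1}$ uniformly on $\overline{M_{n-1}}$ within a preassigned $\epsilon_n>0$ with $\sum_n \epsilon_n<\infty$; (b) $F_n(\overline{M_n}\setminus M_{n-1})\cap K_n = \emptyset$, enforcing properness in the limit; and (c) $F_n$ is injective. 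The limit $F = \lim_n F_n \colon M \to SL_2(\c)$ will then be the desired proper holomorphic Legendrian embedding.

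The recursive step hinges on three analytic ingredients, each needing to be established in $SL_2(\c)$ in place of $\c^3$. First, a Mergelyan-type approximation theorem for Legendrian curves in $SL_2(\c)$: every Legendrian map of class $\Ascr^1$ on a Runge admissible subset $S\subset M$ (e.g.\ a smoothly bounded domain with finitely many smooth Jordan arcs attached) admits uniform approximations by Legendrian maps defined on Runge neighbourhoods of $S$. Second, period-dominating Legendrian sprays through a given Legendrian map, namely finite-dimensional holomorphic families of Legendrian deformations whose derivative in the parameter surjects onto $H^1(M_n;\c)$; these kill the periods that arise when integrating Legendrian data around the nontrivial loops introduced as the topology of $M_n$ changes. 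Third, a ``pushing to infinity'' construction that displaces $F_n(bM_n)$ outside $K_n$ while retaining the approximation on $\overline{M_{n-1}}$. All three ingredients rely on the abundance of contact deformations on the complex Lie group $SL_2(\c)$: since $\eta_{SL}$ is the $(2,2)$-entry of the left-invariant Maurer--Cartan form $g^{-1}dg$, left translations are holomorphic contactomorphisms, and holomorphic contact Hamiltonians on $SL_2(\c)$ combined with Oka--Cartan cutoffs on the Stein surface $M$ should yield Legendrian sprays with prescribed support.

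Embeddedness in (c) is obtained by a standard general-position argument: since $\dim_\c SL_2(\c) = 3 > 2\dim_\c M$, a generic Legendrian perturbation is free of self-intersections, and the Legendrian sprays above supply enough flexibility to realize such perturbations within the Legendrian class. I expect the principal obstacle to be the Mergelyan theorem on admissible sets carrying nontrivial topology, since this is the mechanism for absorbing $1$-handles between consecutive $M_n \subset M_{n+1}$. Unlike $\c^3$, whose Legendrian curves admit the explicit global parametrization $\zeta \mapsto \bigl(x(\zeta),\, y(\zeta),\, -\int_{\zeta_0}^{\zeta} x\, dy\bigr)$ so that the approximation problem reduces to Runge approximation of holomorphic functions and $1$-forms on $M$, the affine quadric $SL_2(\c)$ admits no global Darboux chart; one must therefore either patch local Darboux charts on $SL_2(\c)$ while controlling the monodromy of the patching, or proceed intrinsically by integrating infinitesimal Legendrian sprays valued in $\mathfrak{sl}_2(\c)$ along paths in $M$ and controlling the resulting developed map back in $SL_2(\c)$.
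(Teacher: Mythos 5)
Your proposal is a program rather than a proof: the three analytic ingredients you list (a Mergelyan theorem for Legendrian curves in $SL_2(\c)$, period-dominating Legendrian sprays there, and a pushing-to-infinity lemma there) are precisely the hard content, none of them is established, and you yourself flag the Mergelyan step on admissible sets with nontrivial topology as ``the principal obstacle'' and leave it open. The difficulty you correctly identify --- that $SL_2(\c)$ has no global Darboux chart, so the explicit parametrization $\bigl(x,y,-\int x\,dy\bigr)$ of Legendrian curves in $\c^3$ is unavailable and the monodromy of local Darboux patchings must be controlled --- is exactly what the paper avoids, and the idea it uses to avoid it is absent from your proposal. Namely, the holomorphic map $\Ygot\colon\c^3\to SL_2(\c)$ of \eqref{eq:Ygot} carries Legendrian immersions $M\to\c^3$ to Legendrian immersions $M\to SL_2(\c)$, so the entire approximation/spray/Mergelyan machinery can be borrowed wholesale from the $\c^3$ theory of \cite{AlarconForstnericLopez2016Legendrian}; the price is that $\Ygot$ is neither injective nor proper, and the actual work of the paper consists in producing a Legendrian embedding $F=(X,Y,Z)\colon M\hra\c^3$ for which $(X,Y,e^Z)$ is one-to-one (so that $\Ygot\circ F$ is injective, via a general position lemma in which the difference map must avoid the discrete set $\{(0,0,2m\pi\imath)\}$ rather than just the origin) and $\max\{|X|,-\Re Z\}$ is a proper function on $M$ (so that $\Ygot\circ F$ is proper). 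Properness of $F$ itself, or even of two of its coordinates, does not suffice, as Remark \ref{rem:C3->SL2C} shows; the correct exhaustion function $\max\{|X|,-\Re Z\}$ is dictated by the exponential factors in $\Ygot$, and pushing it up along the boundary requires a delicate two-stage argument (Lemma \ref{lem:main-SL2C}) because $X$ and $Z$ are coupled by the Legendrian equation $dZ+X\,dY=0$.

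If you insist on working intrinsically in $SL_2(\c)$, your skeleton (nested Runge exhaustion, properness enforced by $F_n(\overline{M_n}\setminus M_{n-1})\cap K_n=\emptyset$, embeddedness by a transversality count using $3>2$) is the standard and correct one, and your observation that $\eta_{SL}$ is the $(2,2)$-entry of the Maurer--Cartan form, hence left-invariant, is a genuinely useful source of contactomorphisms. But until the Mergelyan and period-domination steps are actually carried out on $SL_2(\c)$ --- in particular the absorption of $1$-handles, where the periods of the Legendrian data around newly created loops must be killed while remaining tangent to the point-dependent contact distribution --- the argument has a genuine gap. The shortest route to closing it is the one the paper takes: prove everything in $\c^3$ and transport it by $\Ygot$ after arranging the two compensating properties above.
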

The holomorphic map $\Ygot\colon \c^3\to SL_2(\c)$ given by
\begin{equation}\label{eq:Ygot}
	\Ygot(x,y,z)=\left(
	\begin{array}{cc}
	e^{-z} & xe^{z}
	\\
	ye^{-z} & (1+xy)e^z
	\end{array}\right),\quad (x,y,z)\in\c^3,
\end{equation}
maps holomorphic Legendrian immersions $M\to\c^3$ into holomorphic Legendrian immersions $M\to SL_2(\c)$ (see Mart\'in, Umehara, and Yamada \cite[p.\ 210]{MartinUmeharaYamada2014RMI}). However, there are two main problems with using the map $\Ygot\colon\c^3\to SL_2(\c)$ in order to obtain properly embedded Legendrian curves in $SL_2(\c)$; namely, it is neither injective nor proper (see Remark \ref{rem:C3->SL2C} for a careful discussion of this assertion). 
The proof of Theorem \ref{th:intro-SL2C} consists of, given an open Riemann surface $M$, constructing a holomorphic Legendrian embedding $F=(X,Y,Z)\colon M\hra \c^3$ such that 
\[
	(X,Y,e^Z)\colon M\to\c^3\quad \text{is one-to-one}
\]
and
\[
	\max\{|X|,-\Re Z\}\colon M\to\r_+=[0,+\infty)\quad \text{is a proper map},
\]
where $\Re$ denotes real part; see Theorem \ref{th:main-SL2C} for a more precise statement. It easily follows from these two conditions that $\Ygot\circ F\colon M\hra SL_2(\c)$, where $\Ygot$ is the map \eqref{eq:Ygot}, is a proper holomorphic Legendrian embedding. 
%
Forstneri\v c, L\'opez, and the author made in \cite{AlarconForstnericLopez2016Legendrian} a systematic investigation of holomorphic Legendrian curves in the complex Euclidean spaces; in particular, they proved several approximation results which will be exploited in the present paper.

Theorem \ref{th:intro-SL2C} is in connection with a result by Forstneri\v c and the author asserting that every bordered Riemann surface properly embeds in $SL_2(\c)$ as a holomorphic {\em null} curve (see \cite[Corollary 1.5]{AlarconForstneric2015MA}). The latter is a holomorphic immersion $F\colon M\to SL_2(\c)$ of an open Riemann surface $M$ into $SL_2(\c)$ satisfying the nullity condition $\det dF=0$ everywhere on $M$.  It still remains open the question whether every open Riemann surface properly embeds (or at least immerses) in $SL_2(\c)$ as a holomorphic {\em null} curve (cf.\ \cite[Problem 1, p.\ 919]{AlarconForstneric2015MA}). 
The main difference between {\em Legendrian} and {\em null} curves is that the holomorphic distribution controlling Legendrian curves does depend on the base point, and hence the constructions in the Legendrian case become more delicate and involved.

Theorem \ref{th:intro-SL2C} is also related to the embedding problem for open Riemann surfaces in $\c^2$, asking whether every such properly embeds in $\c^2$ as a complex curve (see Forstneri\v c and Wold \cite{ForstnericWold2009JMPA,ForstnericWold2013APDE} and the references therein for a discussion of the state of the art of this long-standing, likely very difficult, open problem).

A {\em flat front}  in the hyperbolic space $\h^3$ is a flat surface in $\h^3$ with admissible singularities. Here by a {\em flat surface} we mean a surface in $\h^3$ whose Gauss curvature vanishes at the regular points; a singular point of such surface is said an {\em admissible singularity} if the corresponding points on nearby parallel surfaces are regularly immersed (see e.g. Kokubu, Umehara, and Yamada \cite[\textsection2]{KokubuUmeharaYamada2004PJM} for more details). It is classical that the only complete smooth flat surfaces in $\h^3$ are the horospheres and the hyperbolic cylinders (see Sasaki \cite{Sasaki1973KMSR}), and this is why the study of flat surfaces with singularities in $\h^3$, and, in particular, of flat fronts, has been the focus of interest in this subject during the last decades (see e.g. \cite{GalvezMartinezMilan2000MA,KokubuUmeharaYamada2004PJM,GalvezMira2005CVPDE,KokubuRossmanUmeharaYamada2007JMSJ,MartinUmeharaYamada2014RMI,MartinezMilan2014AGAG,MartinezRoitmanTenenblat2015AGAG} and the references therein). Flat fronts in $\h^3$ enjoy a nice theory; it is for instance well known that these objects admit a Weierstrass type representation formula in terms of holomorphic data on a Riemann surface (see G\'alvez, Mart\'inez, and Mil\'an \cite{GalvezMartinezMilan2000MA} and G\'alvez and Mira \cite{GalvezMira2005CVPDE}). It is also well known that, given an open Riemann surface $M$ and a holomorphic Legendrian immersion $F\colon M\to SL_2(\c)$, the map
\[
	F \bar F^t\colon M\to\h^3=SL_2(\c)/SU(2)=\{a \bar a^t\colon a\in SL_2(\c)\},
\]
where $\bar\cdot$ and $\cdot^t$ denote complex conjugation and transpose matrix, respectively, determines a flat front in $\h^3$ which is conformal with respect to the metric induced on $M$ by the second fundamental form of $F\bar F^t$, and that this fact locally characterizes flat fronts in $\h^3$ (see \cite{KokubuUmeharaYamada2004PJM,GalvezMira2005CVPDE}). Moreover, the flat front $F\bar F^t$ is said to be {\em weakly complete} (according to Kokubu, Rossman, Umehara, and Yamada \cite[\textsection3]{KokubuRossmanUmeharaYamada2007JMSJ}) if its holomorphic lift $F$ is complete in the sense that the Riemannian metric induced on $M$ by the one in $SL_2(\c)$ via $F$ is complete. Properness and weakly completeness are the most natural global assumptions in the theory of flat fronts in $\h^3$.

Since the map 
\[
	SL_2(\c)\ni a\longmapsto a\bar a^t\in\h^3
\]
is proper, Theorem \ref{th:intro-SL2C} implies the following
%
%
\begin{corollary}\label{co:intro-H3}
Every open Riemann surface $M$ is the complex structure associated to the second fundamental form of a proper, weakly complete, flat front in $\h^3$. 
\end{corollary}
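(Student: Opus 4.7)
The plan is to deduce the corollary as a direct consequence of Theorem \ref{th:intro-SL2C} together with the representation theory for flat fronts recalled in the paragraphs preceding the statement. Given an open Riemann surface $M$, the first step is to invoke Theorem \ref{th:intro-SL2C} and choose a proper holomorphic Legendrian embedding $F\colon M\hookrightarrow SL_2(\c)$. One then sets
\[
\Phi=F\bar F^t\colon M\longrightarrow \h^3,
\]
and claims that $\Phi$ is a proper, weakly complete flat front whose associated conformal structure, determined by its second fundamental form, coincides with that of $M$.

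That $\Phi$ is a flat front with the prescribed conformal structure follows directly from the local characterization cited from \cite{KokubuUmeharaYamada2004PJM,GalvezMira2005CVPDE}, which requires only that $F$ be a holomorphic Legendrian immersion. Properness of $\Phi$ is then automatic upon writing $\Phi=\pi\circ F$, where $\pi\colon SL_2(\c)\to\h^3$, $\pi(a)=a\bar a^t$, is the proper map indicated in the excerpt, so that the composition of the two proper maps $F$ and $\pi$ is proper.

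The only point needing a small argument is weak completeness, which by the definition from \cite{KokubuRossmanUmeharaYamada2007JMSJ} requires the metric pulled back by $F$ from an ambient Riemannian metric on $SL_2(\c)$ to be complete. I would equip $SL_2(\c)\subset\c^4$ with the restriction of the standard Euclidean metric on $\c^4$, which is complete on $SL_2(\c)$ since the latter is a closed submanifold of $\c^4$. For any divergent path $\gamma\colon[0,1)\to M$ (one that eventually leaves every compact subset of $M$), the properness of $F$ forces $F\circ\gamma$ to diverge in $SL_2(\c)$; the completeness of the ambient metric then forces $F\circ\gamma$ to have infinite length, and since $F$ is an immersion the same is true of $\gamma$ measured in $F^*g$. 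By the Hopf--Rinow theorem this yields that $(M,F^*g)$ is complete, and hence $\Phi$ is weakly complete. The corollary is thus essentially a repackaging of Theorem \ref{th:intro-SL2C} in the language of flat fronts, and the only mildly non-trivial ingredient is the last metric-geometric passage from properness of $F$ to completeness of the pulled-back metric.
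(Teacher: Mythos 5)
Your argument is correct and coincides with the paper's (essentially one-line) derivation: properness of $F\bar F^t$ comes from composing the proper embedding of Theorem \ref{th:intro-SL2C} with the proper projection $a\mapsto a\bar a^t$, the flat-front and conformality claims are exactly the cited representation results, and weak completeness follows because a proper immersion into a complete Riemannian manifold induces a complete metric. The only cosmetic caveat is that the metric on $SL_2(\c)$ entering the definition of weak completeness is the canonical (left-invariant) one rather than the restriction of the Euclidean metric of $\c^4$, but since that metric is likewise complete your divergent-path argument applies verbatim.
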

If $M$ is an open Riemann surface and there is a {\em complete} flat front $M\to\h^3$ (in the sense of \cite{KokubuUmeharaYamada2004PJM,KokubuRossmanUmeharaYamada2007JMSJ}) being conformal with respect to the second fundamental form, then there are a compact Riemann surface $M'$ and a finite subset $\{p_1,\ldots,p_m\}\subset M'$ such that $M$ is biholomorphic to $M'\setminus\{p_1,\ldots,p_m\}$ (see again \cite{KokubuUmeharaYamada2004PJM,KokubuRossmanUmeharaYamada2007JMSJ}). Thus, completeness imposes strong restrictions on the complex structure, and even on the topology, of flat fronts in $\h^3$; in particular the examples given in Corollary \ref{co:intro-H3} are not complete in general.

\subsection*{Organization of the paper}

In Section \ref{sec:prelim} we state the notation and preliminaries that will be needed throughout the paper. Section \ref{sec:GP} is devoted to the proof of a general position result for holomorphic Legendrian curves in $\c^3$ which will be the key to ensure the embeddedness of the examples in Theorem \ref{th:intro-SL2C}. Finally, we prove Theorem \ref{th:intro-SL2C} in Section \ref{sec:SL2C}.


\section{Preliminaries}\label{sec:prelim}

We denote by $\c_*=\c\setminus\{0\}$, $\imath=\sqrt{-1}$, $\z_+=\{0,1,2,\ldots\}$, and $\r_+=[0,+\infty)$.
Given $N\in\n$, we denote by $|\cdot|$ 
the Euclidean norm 
in $\c^N$. 
Let $K$ be a compact topological space and $f\colon K\to \c^N$ be a continuous map, we denote by 
\[
	\|f\|_{0,K}:=\max\{|f(p)|\colon p\in K\}
\]
the maximum norm of $f$ on $K$. Likewise, if $K$ is a subset of a Riemann surface $M$, then for any $r\in\z_+$ we denote by 
\[
	\|f\|_{r,K}
\]
the standard $\Cscr^r$-norm of a function $f\colon K\to\c^n$ of class $\Cscr^r(K)$, where, if $r>0$, the derivatives are measured with respect to any fixed Riemannian metric on $M$ (its precise choice will not be important in the paper).

%
%

Let $M$ be an open Riemann surface. Given a subset $A\subset M$ we denote by $\Oscr(A)$ the space of functions $A\to \c$ which are holomorphic on an unspecified open neighborhood (depending on the map) of $A$ in $M$. If $A\subset M$ is a smoothly bounded compact domain and $r\in\z_+$, we denote by $\Ascr^r(A)$ the space of $\Cscr^r$ functions $A\to\c$ which are holomorphic on the interior $\mathring A=A\setminus bA$; we just write $\Ascr^r(A)$ for $(\Ascr^r(A))^N=\Ascr^r(A)\times \stackrel{\text{$N$ times}}{\cdots}\times\Ascr^r(A)$ when there is no place for ambiguity. Thus, by a Legendrian curve $A\to\c^{2n+1}$ $(n\in\n)$ of class $\Ascr^r(A)$, $r\ge 1$, we simply mean a map of class $\Ascr^r(A)$ whose restriction to $\mathring A$ is a holomorphic Legendrian curve. 

A {\em compact bordered Riemann surface} is a compact Riemann surface $\cM$ with nonempty boundary $bM\subset\cM$ consisting of finitely many pairwise disjoint smooth Jordan curves. The interior $M=\cM\setminus bM$ of $\cM$ is called a {\em bordered Riemann surface}. It is classical that every compact bordered Riemann surface $\cM$ is diffeomorphic to a smoothly bounded compact domain in an open Riemann surface $M'$. The space $\Ascr^r(\cM)$ is defined as above.

%
%
\subsection{A Mergelyan theorem for Legendrian curves}\label{ss:Mergelyan}

A compact subset $K$ of an open Riemann surface $M$ is said to be {\em Runge}, or {\em holomorphically convex}, if $M\setminus K$ has no relatively compact connected components in $M$. By Mergelyan's theorem, $K\subset M$ is Runge if, and only if, every continuous function $K\to\c$, holomorphic in $\mathring K$, may be approximated uniformly on $K$ by holomorphic functions $M\to\c$ (see \cite{Runge1885AM,Mergelyan1951DAN}).

%
%
\begin{definition}[\text{\cite[Def.\ 4.2]{AlarconForstnericLopez2016Legendrian}}]\label{def:admissible}
A compact subset $S$ of an open Riemann surface $M$ is called {\em admissible} if $S=K\cup\Gamma$, where $K=\bigcup_j\overline D_j$ is a union of finitely many pairwise disjoint, smoothly bounded, compact domains $\overline D_j$ in $M$ and $\Gamma=\bigcup_i\Gamma_i$ is a union of finitely many pairwise disjoint smooth arcs or closed curves in $M$ that intersect $K$ only in their endpoints (or not at all), and such that their intersections with the boundary $bK$ are transverse.
\end{definition}

Let $S=K\cup\Gamma\subset M$ be an admissible subset of an open Riemann surface and $r\in\z_+$ be a nonnegative integer. We denote by
\[
	\Ascr^r(S)=\Cscr^r(S)\cap\Oscr(\mathring K),
\]
and endow the space $\Ascr^r(S)$ with the natural $\Cscr^r(S)$-topology which coincides with the $\Cscr^r(K)$-topology on the subset $K$ and with the $\Cscr^r$-norm of the function measured with respect to a fixed regular parametrization of $\Gamma_i$ on each of the arcs $\Gamma_i\subset\Gamma$. 

Let $\theta$ be a holomorphic $1$-form vanishing nowhere on $M$ (such always exists by the Oka-Grauert principle (see \cite[Theorem 5.3.1]{Forstneric2011book}); see also \cite{AlarconFernandezLopez2012CMH} for an alternative proof). Given a function $f\colon S\to\c$ of class $\Ascr^1(S)$ we define
\begin{equation}\label{eq:df}
	df:=\wh f\theta,
\end{equation}
where $\wh f\colon S\to\c$ is the function of class $\Ascr^0(S)$ given by $\wh f=df/\theta$ on $K$ and $\wh f(\alpha(t))=(f\circ\alpha)'(t)/\theta(\alpha(t),\dot\alpha(t))$ for any smooth regular path $\alpha$ in $M$ parametrizing a connected component $\Gamma_i$ of $\Gamma$. Obviously, $\wh f$ depends on the choice of $\theta$, but $df$ does not.

%
%
\begin{definition}[\text{\cite[Def.\ 4.2]{AlarconForstnericLopez2016Legendrian}}]\label{def:generalized}
Let $S=K\cup\Gamma$ be an admissible subset of an open Riemann surface $M$. A map $F=(X_1,Y_1,\ldots,X_n,Y_n,Z)\colon S\to\c^{2n+1}$ $(n\in\n)$ is said to be a {\em generalized Legendrian curve} if $F\in\Ascr^1(S)$ and $F^*\eta_0=0$, where $\eta_0$ is the standard contact form of $\c^{2n+1}$ given in \eqref{eq:contact-C2n+1}; equivalently, if
\[
	dZ+\sum_{j=1}^n X_jdY_j=0\quad \text{everywhere on $S$}.
\]
\end{definition}

The following Mergelyan type approximation result for generalized Legendrian curves in $\c^{2n+1}$ is a particular instance of \cite[Theorem 5.1]{AlarconForstnericLopez2016Legendrian}.
\begin{theorem}\label{th:Mergelyan}
Let $S$ be a Runge admissible subset of an open Riemann surface $M$. Every generalized Legendrian curve $S\to\c^{2n+1}$ $(n\in\n)$ may be approximated in the $\Cscr^1(S)$-topology by holomorphic Legendrian embeddings $M\hra\c^{2n+1}$ having no constant component function.
\end{theorem}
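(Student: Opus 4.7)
The plan is to realize the generalized Legendrian curve $F=(X_1,Y_1,\ldots,X_n,Y_n,Z)\colon S\to\c^{2n+1}$ as a $\Cscr^1(S)$-limit of honest holomorphic Legendrian embeddings $M\hra\c^{2n+1}$ via a two-stage scheme: first approximate on a Runge neighborhood of $S$ in $M$, then extend to all of $M$ by a standard exhaustion argument, and finally arrange embeddedness and non-constancy by general position. The guiding observation is that a Legendrian map into $(\c^{2n+1},\eta_0)$ is encoded by the $2n$ planar components $(X_1,Y_1,\ldots,X_n,Y_n)$, because the $Z$-component is determined up to an additive constant by the primitive
\begin{equation*}
	dZ=-\sum_{j=1}^n X_j\, dY_j,
\end{equation*}
provided this holomorphic $1$-form has vanishing periods on every closed cycle.

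I would first apply the classical Mergelyan theorem componentwise to the $X_j,Y_j$ on $S$, producing holomorphic approximations $(\wt X_j,\wt Y_j)$ on an arbitrarily small Runge neighborhood $U\supset S$ in $M$. The obstruction to integrating the resulting form $\sum_j\wt X_j\, d\wt Y_j$ to a single-valued $\wt Z$ is that its periods over the cycles $\gamma_1,\ldots,\gamma_\ell$ generating $H_1(S;\z)$ are generally nonzero. The key (and expected hardest) step is to construct a finite-dimensional family of generalized Legendrian perturbations of $F$ supported in small neighborhoods of points on the $\gamma_i$, whose associated period map has surjective differential at the origin---a \emph{period-dominating family}, in the terminology of \cite{AlarconForstnericLopez2016Legendrian}. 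Concretely, one inserts small holomorphic bumps into suitable $Y_j$-components near disjoint points of each cycle, chosen so that the corresponding period-variation matrix is nondegenerate; an implicit-function-theorem argument then corrects the periods to zero within the class of $\Cscr^1(S)$-close generalized Legendrian curves. This difficulty is not present for null curves because there the relevant distribution is fixed, whereas here the contact plane varies with the base point and the bumps must be tailored to the current values of the $X_j,Y_j$ along the cycles.

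Once the periods vanish, integrating produces a genuine holomorphic Legendrian curve $\wt F$ on $U$ arbitrarily $\Cscr^1(S)$-close to $F$. To extend to all of $M$, I would fix a normal exhaustion $S\subset S_1\subset S_2\subset\cdots\subset M$ by Runge admissible sets and iterate the previous step on each pair $S_k\subset S_{k+1}$, with geometric control $\|\wt F_{k+1}-\wt F_k\|_{1,S_k}<\epsilon_k$ for a rapidly decreasing sequence $\epsilon_k$, obtaining in the limit a holomorphic Legendrian map $M\to\c^{2n+1}$ arbitrarily close to $F$ on $S$. Finally, an additional arbitrarily small generic Legendrian perturbation upgrades the limit to an embedding with no constant component: embeddedness is achieved by the transversality argument developed in Section \ref{sec:GP}, ensuring that $(p,q)\mapsto\wt F(p)-\wt F(q)$ avoids $0$ off the diagonal and that $d\wt F$ vanishes nowhere, and non-constancy of each component is a generic condition within the same Legendrian deformation family.
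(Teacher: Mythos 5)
The paper does not prove this statement at all: it quotes it as a particular instance of \cite[Theorem 5.1]{AlarconForstnericLopez2016Legendrian}, only remarking that the non-constancy of the components follows from an inspection of that proof. Your outline --- reduce to the data $(X_1,Y_1,\ldots,X_n,Y_n)$ with $dZ=-\sum_{j=1}^n X_j\,dY_j$, kill the periods of this form by a period-dominating family of generalized Legendrian deformations supported near the cycles, pass to all of $M$ by an exhaustion through Runge admissible sets, and secure embeddedness and non-constancy by the transversality argument on the difference map --- is essentially a faithful summary of the proof given in that reference, so there is nothing to correct.
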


The condition that the approximating embeddings in the above theorem can be chosen to do not have any constant component function is not explicitly mentioned in the statement of \cite[Theorem 5.1]{AlarconForstnericLopez2016Legendrian} but it easily follows from an inspection of its proof (see in particular \cite[Lemma 4.4 and proof of Lemma 5.2]{AlarconForstnericLopez2016Legendrian}).


\section{A general position result for Legendrian curves}\label{sec:GP}

In this section we prove a desingularizing result for Legendrian curves in $\c^{2n+1}$ which will be the key to ensure the one-to-oneness of the examples in Theorem \ref{th:intro-SL2C}.

\begin{lemma}\label{lem:GP}
Let $\cM=M\cup bM$ be a compact bordered Riemann surface. Every Legendrian curve $F\colon \cM\to\c^{2n+1}$ $(n\in\n)$ of class $\Ascr^1(\cM)$ may be approximated in the $\Cscr^1(\cM)$-topology by Legendrian embeddings $\wt F=(\wt F_1,\wt F_2,\ldots, \wt F_{2n+1})\colon \cM\hra\c^{2n+1}$ of class $\Ascr^1(\cM)$ having no constant component function and such that the map
\[
	(\wt F_1,\wt F_2,\ldots, e^{\wt F_{2n+1}})\colon \cM\to \c^{2n}\times\c_*\subset\c^{2n+1}
\]
is one-to-one.
\end{lemma}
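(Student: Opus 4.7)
My plan is three-step: reduce to the case where $F$ is already a Legendrian embedding with no constant component function by applying Theorem \ref{th:Mergelyan}; identify the precise obstruction to the added injectivity of $(\wt F_1,\ldots,\wt F_{2n}, e^{\wt F_{2n+1}})$; and finally dispose of this obstruction by a general position argument inside the Legendrian class.

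For the reduction, embed $\cM$ as a smoothly bounded compact Runge subset of an open Riemann surface $M'$ (classical). Regarded as a generalized Legendrian curve on the admissible set $\cM \subset M'$, $F$ may by Theorem \ref{th:Mergelyan} be approximated in $\Cscr^1(\cM)$ by a Legendrian embedding $M' \hra \c^{2n+1}$ with no constant component function. So we may assume $F$ itself has these properties from the outset.

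For the obstruction, set $\Phi := (F_1,\ldots,F_{2n},e^{F_{2n+1}}) \colon \cM \to \c^{2n}\times\c_*$. Since $F$ is already injective, $\Phi$ fails to be injective precisely on
$$
\mathcal B = \bigl\{(p,q) \in \cM\times\cM \setminus \Delta \colon F_j(p)=F_j(q)\ \text{for all}\ 1 \le j \le 2n,\ \text{and}\ F_{2n+1}(p)-F_{2n+1}(q) \in 2\pi\imath\z_*\bigr\}.
$$
The set $\mathcal A := \{(p,q) \in \cM\times\cM : F_j(p)=F_j(q),\ 1 \le j \le 2n\}$ is a closed analytic subset of $\cM\times\cM$; since $F_{2n+1}(\cM) \subset \c$ is compact, only finitely many $k \in \z_*$ can satisfy $F_{2n+1}(p)-F_{2n+1}(q) = 2\pi\imath k$ on $\mathcal A \setminus \Delta$, and $\mathcal B$ decomposes into a finite disjoint union of closed analytic subsets $\mathcal B_k := \{(p,q) \in \mathcal A\setminus\Delta : F_{2n+1}(p)-F_{2n+1}(q) = 2\pi\imath k\}$.

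For the general position step, employ the period-dominating Legendrian spray construction of \cite{AlarconForstnericLopez2016Legendrian} to build a holomorphic family $F(w,\cdot) \colon \cM \to \c^{2n+1}$ of Legendrian maps of class $\Ascr^1(\cM)$, parameterized by $w$ in a small ball $B \subset \c^N$ with $F(0,\cdot) = F$, such that the evaluation
$$
\Psi(w,p,q) := \bigl(F_1(w,p)-F_1(w,q),\ldots,F_{2n}(w,p)-F_{2n}(w,q),\ e^{F_{2n+1}(w,p)}-e^{F_{2n+1}(w,q)}\bigr)
$$
is a submersion onto $\c^{2n+1}$ at each point $(0,p_0,q_0)$ with $(p_0,q_0) \in \mathcal B$. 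Since $\cM\times\cM\setminus\Delta$ has real dimension $4$ while the target has real dimension $4n+2 \ge 6$, parametric transversality delivers $w$ arbitrarily close to $0$ for which $\Psi(w,\cdot,\cdot) \neq 0$ off the diagonal; openness of embeddedness and of non-constancy of each component then yields an $F(w,\cdot)$ satisfying all the requirements. The main obstacle is the Legendrian spray itself: one cannot freely perturb $F_1,\ldots,F_{2n}$ and solve $dF_{2n+1} = -\sum_{j=1}^n F_{2j-1}\,dF_{2j}$ for the last coordinate, because the right-hand side is generally not exact, so auxiliary spray parameters must be introduced to kill periods on a basis of $H_1(\cM;\z)$ while retaining the dominating property. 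This is exactly the content of the period-dominating sprays in \cite[\S 4]{AlarconForstnericLopez2016Legendrian}, and the nonlinear modification in the last coordinate does not spoil domination, since $e^{F_{2n+1}}$ vanishes nowhere and thus contributes only a nonzero multiplicative factor in the differential of $\Psi$ in the last slot.
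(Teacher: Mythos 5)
Your proposal is correct and follows essentially the same route as the paper: reduce via Theorem \ref{th:Mergelyan} to an embedded $F$ with no constant component, then use the submersive difference-map spray from the proof of \cite[Lemma 4.4]{AlarconForstnericLopez2016Legendrian} together with transversality and a dimension count to remove the finitely many "period" coincidences off the diagonal. The only cosmetic difference is that you compose the last coordinate with $\exp$ and ask $\Psi$ to be transverse to $\{0\}$, whereas the paper equivalently keeps $\delta F$ and asks it to be transverse to the discrete set $\Lambda=\{(0,\ldots,0,2m\pi\imath)\colon m\in\z\}$; your observation that the exponential only contributes a nonzero factor at the relevant points is exactly why the two formulations agree.
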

Lemma \ref{lem:GP} is a subtle extension to \cite[Lemma 4.4]{AlarconForstnericLopez2016Legendrian}; it will enable us to construct embedded Legendrian curves in $\c^3$ which, by composing with the non-injective map $\Ygot\colon\c^3\to SL_2(\c)$ given in \eqref{eq:Ygot}, provide embedded Legendrian curves in $SL_2(\c)$.
\begin{proof}
For simplicity of exposition we shall assume that $n=1$; the same proof applies in general. Moreover, by Theorem \ref{th:Mergelyan} we may assume that the initial Legendrian curve $F$ is an embedding of class $\Ascr^1(\cM)$ having no constant component function. 

Let us write $F=(X,Y,Z)\colon \cM\to\c^3$. Consider the closed discrete subset
\begin{equation}\label{eq:Lambda}
	\Lambda:=\{(0,0,2m\pi\imath)\in\c^3\colon m\in\z\}\subset\c^3.
\end{equation}
Since $F$ is an embedding, the difference map $\delta F\colon \cM\times\cM\to\c^3$ defined by
\[
	\delta F(p,q)=F(q)-F(p),\quad p,q\in \cM,
\]
satisfies
\begin{equation}\label{eq:deltaF}
	(\delta F)^{-1}(0)=D_{\cM}:=\{(p,p)\colon p\in\cM\}.
\end{equation}
Thus, since $\delta F$ is continuous, $\Lambda$ is closed and discrete, and $\cM$ is compact, there is an open neighborhood $U\subset \cM\times\cM$ of the diagonal $D_{\cM}$ such that
\begin{equation}\label{eq:deltaFLambda}
\delta F(\overline U\setminus D_{\cM})\cap\Lambda=\emptyset.
\end{equation} 

On the other hand, by \cite[Proof of Lemma 4.4]{AlarconForstnericLopez2016Legendrian}, there exists a holomorphic map $H\colon \cM\times\c^N\to\c^3$ for some big $N\in\n$ such that the following conditions are satisfied for some $r>0$:
\begin{enumerate}[\it i)]
\item $H(\cdot,0)=F$.
\item $H(\cdot,\zeta)\colon \cM\to\c^3$ is a Legendrian immersion of class $\Ascr^1(\cM)$ for all $\zeta \in r\b$, where $\b$ denotes the unit ball in $\c^N$.
\item The difference map $\delta H\colon\cM\times\cM\times r\b\to\c^3$, defined by
\[
	\delta H(p,q,\zeta)=H(q,\zeta)-H(p,\zeta),\quad p,q\in\cM,\; \zeta\in r\b,
\]
is a submersive family of maps on $\cM\times\cM\setminus U$, meaning that the derivative
\[
	\di_\zeta|_{\zeta=0}\delta H(p,q,\zeta)\colon\c^N\to\c^3
\]
is surjective for all $(p,q)\in\cM\times\cM\setminus U$.
\end{enumerate}
Since $\cM\times\cM\setminus U$ is compact, {\it iii)} guarantees that the partial differential $\di_\zeta(\delta H)$ is surjective on $(\cM\times\cM \setminus U)\times r'\b$ for some number $0<r' <r$. It follows that the map $\delta H\colon (\cM\times\cM\setminus U)\times r'\b\to\c^3$ is transverse to any submanifold of $\c^3$, in particular, to the the closed discrete subset $\Lambda\subset\c^3$ \eqref{eq:Lambda}. By Abraham's reduction to Sard's theorem (see \cite{Abraham1963BAMS}; see also \cite[\textsection7.8]{Forstneric2011book} for the holomorphic case) we have that for a generic choice of $\zeta\in r'\b$ the difference map $\delta H(\cdot,\cdot,\zeta)$ is transverse to $\Lambda$ on $\cM\times\cM\setminus U$, and hence, by dimension reasons, 
\begin{equation}\label{eq:deltaFU}
	\delta H(\cM\times\cM\setminus U,\zeta)\cap\Lambda=\emptyset.
\end{equation}
Choosing $\zeta$ sufficiently close to $0\in\c^N$ we get a Legendrian immersion 
\[
	\wt F=(\wt X,\wt Y,\wt Z):=H(\cdot,\zeta)\colon\cM\to\c^3
\]
of class $\Ascr^1(\cM)$ which is as close to $F$ in the $\Cscr^1(\cM)$-norm as desired (in particular we may choose $\wt F$ having no constant component function), and, in view of \eqref{eq:deltaFLambda} and \eqref{eq:deltaFU}, satisfies
\[
	\delta \wt F(\cM\times\cM\setminus D_{\cM})\cap\Lambda=\emptyset.
\]
(Obviously, $\delta\wt F(p)=0\in\Lambda$ for all $p\in D_{\cM}$.)
This guarantees that the map $(\wt X,\wt Y,e^{\wt Z})\colon \cM\to\c^2\times\c_*\subset \c^3$ is one-to-one, and hence the same happens to $\wt F\colon \cM\to\c^3$. This implies that $\wt F$ is an embedding, which concludes the proof.
\end{proof}


\section{Proof of Theorem \ref{th:intro-SL2C}}\label{sec:SL2C}

The aim of this section is to prove Theorem \ref{th:intro-SL2C} in the introduction. Before proceeding with that, let us point out the following

\begin{remark}\label{rem:C3->SL2C}
Forstneri\v c, L\'opez, and the author proved in \cite[Theorem 1.1]{AlarconForstnericLopez2016Legendrian} that every open Riemann surface, $M$, carries a proper holomorphic Legendrian embedding $F=(F_1,F_2,F_3)\colon M\hra\c^3$, but also that, given $\{i,j\}\in\{1,2,3\}$, $i\neq j$, such an embedding $F$ can be found so that $(F_i,F_j)\colon M\to\c^2$ is a proper map. However, this fact does not imply Theorem \ref{th:intro-SL2C} (even allowing self-intersections) by making use of the map $\Ygot\colon\c^3\to SL_2(\c)$ given in \eqref{eq:Ygot}. Indeed, consider the sequence $\{p_j=(x_j,y_j,z_j)\}_{j\in\n}$ where $p_j=(e^{-j},-e^j,j)\in\c^3$ for all $j\in\n$. Observe that the sequences $\{(x_j,y_j)\}_{j\in\n}$, $\{(x_j,z_j)\}_{j\in\n}$, and $\{(y_j,z_j)\}_{j\in\n}$ are all divergent in $\c^2$, whereas
\[
	\Ygot(p_j)=\left(\begin{array}{cc}
	e^{-j} & 1
	\\
	-1 & 0
	\end{array}\right),\quad j\in\n,
\]
and so the sequence $\{\Ygot(p_j)\}_{j\in\n}$ is convergent (and hence bounded) in $SL_2(\c)$.
\end{remark}

We will obtain Theorem \ref{th:intro-SL2C} as a consequence of the following approximation result by proper (in a strong sense) Legendrian embeddings in $\c^{2n+1}$.
%
%
\begin{theorem}\label{th:main-SL2C}
Let $M$ be an open Riemann surface, $K\subset M$ be a smoothly bounded Runge compact domain, and $F=(F_1,F_2,\ldots,F_{2n+1})\colon K\to\c^{2n+1}$ be a Legendrian curve of class $\Ascr^1(K)$. Then $F$ may be approximated in the $\Cscr^1(K)$-topology by holomorphic Legendrian embeddings $\wt F=(\wt F_1,\wt F_2,\ldots,\wt F_{2n+1})\colon M\hra \c^{2n+1}$ satisfying the following properties:
\begin{enumerate}[\rm (i)]
\item The  function 
\[
	\max\{|\wt F_1|,-\Re\wt F_{2n+1}\}\colon M\to\r_+=[0,+\infty)
\] is proper,
where $\Re$ denotes the real part.
\item The holomorphic map 
\[
	(\wt F_1, \wt F_2,\ldots, e^{\wt F_{2n+1}})\colon M\to\c^{2n}\times\c_*\subset\c^{2n+1}
\]
is one-to-one.
\end{enumerate}
\end{theorem}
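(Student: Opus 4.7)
The strategy is a standard recursive exhaustion scheme coupling the Mergelyan theorem (Theorem \ref{th:Mergelyan}) with the general position lemma (Lemma \ref{lem:GP}). Exhaust $M$ by an increasing sequence of smoothly bounded Runge compact domains $K=K_0\subset K_1\subset K_2\subset\cdots$ with $\bigcup_{j\ge 0}K_j=M$, so that each pair $K_j\subset K_{j+1}$ is either a noncritical extension or a critical extension of index at most one with respect to a fixed strongly subharmonic Morse exhaustion function on $M$. Fix a summable sequence $\epsilon_j>0$. Setting $F_0=F$, we build inductively Legendrian embeddings $F_j=(F_{j,1},\ldots,F_{j,2n+1})\colon K_j\hra\c^{2n+1}$ of class $\Ascr^1(K_j)$ satisfying, for all $j\ge 1$,
\begin{enumerate}[\rm (a)]
\item $\|F_j-F_{j-1}\|_{1,K_{j-1}}<\epsilon_j$,
\item $\max\{|F_{j,1}|,-\Re F_{j,2n+1}\}>j$ everywhere on $K_j\setminus\mathring K_{j-1}$,
\item $(F_{j,1},F_{j,2},\ldots,F_{j,2n},e^{F_{j,2n+1}})\colon K_j\to\c^{2n}\times\c_*$ is one-to-one.
\end{enumerate}
Provided the $\epsilon_j$ are chosen sufficiently small, the sequence $\{F_j\}$ converges in $\Cscr^1$ on compacts to a holomorphic Legendrian map $\wt F\colon M\to\c^{2n+1}$ which is an immersion (from (a) applied to the derivatives of the $F_j$'s), which is injective on each $K_{j-1}$ with the modified map $(\wt F_1,\ldots,e^{\wt F_{2n+1}})$ also injective (inherited from (c)), and which satisfies $\max\{|\wt F_1|,-\Re\wt F_{2n+1}\}>j-1$ on $K_j\setminus\mathring K_{j-1}$ (inherited from (b)); this last statement gives properness (i), and injectivity together with properness yields a holomorphic embedding satisfying (ii).

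For the inductive step, assume the noncritical case first. Select finitely many pairwise disjoint smooth arcs $\Gamma\subset K_{j+1}\setminus\mathring K_j$ with endpoints on $bK_j\cup bK_{j+1}$ (and transverse to these boundaries) so that $S=K_j\cup\Gamma$ is a Runge admissible subset of $M$ in the sense of Definition \ref{def:admissible} which is a strong deformation retract of $K_{j+1}$, and so that $\Gamma$ is sufficiently dense in $K_{j+1}\setminus\mathring K_j$ that every point of this closed annular region lies within a prescribed small distance of some arc. Write $F_j=(X_1,Y_1,\ldots,X_n,Y_n,Z)$ on $K_j$. Extend $F_j$ to a generalized Legendrian curve $G\colon S\to\c^{2n+1}$ (Definition \ref{def:generalized}) by prescribing on each arc the components $X_1,Y_1,\ldots,X_n,Y_n$ as smooth functions matching the boundary values on $bK_j$ and integrating $dZ=-\sum_k X_k\,dY_k$ to recover the $Z$-component. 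Split $\Gamma$ into two families: on arcs of the first family we drive $|X_1|$ to exceed $j+2$ at the outer endpoint by prescribing $X_1$ large there; on arcs of the second family we keep $X_1$ equal to a large real constant $R$ and let $Y_1$ translate in the positive real direction, so that $\Re\int X_1\,dY_1$ becomes positive and large, hence $-\Re Z$ exceeds $j+2$ at the outer endpoint. Arrange the two families to jointly cover $K_{j+1}\setminus\mathring K_j$ in the dense sense above. The critical case is reduced to the noncritical case by additionally routing a single arc through the new critical point, in the standard way.

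Apply Theorem \ref{th:Mergelyan} to approximate $G$ in $\Cscr^1(S)$ by a holomorphic Legendrian embedding $\wt G\colon M\hra\c^{2n+1}$ with no constant component, and then apply Lemma \ref{lem:GP} on the compact bordered Riemann surface $K_{j+1}$ to $\wt G|_{K_{j+1}}$ in order to obtain a Legendrian embedding $F_{j+1}\colon K_{j+1}\hra\c^{2n+1}$ of class $\Ascr^1(K_{j+1})$ with no constant component and such that $(F_{j+1,1},\ldots,F_{j+1,2n},e^{F_{j+1,2n+1}})$ is one-to-one on $K_{j+1}$. Taking both approximations sufficiently close in $\Cscr^1$ delivers (a) on $K_j$ and (c) on $K_{j+1}$; property (b) holds on (a neighborhood of) each arc in $\Gamma$ by $\Cscr^0$-closeness of $F_{j+1}$ to $G$, and propagates to the full annular region $K_{j+1}\setminus\mathring K_j$ by equicontinuity of $F_{j+1}$ on the compact set $K_{j+1}$ together with the density of $\Gamma$.

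The main obstacle is the tension between conditions (b) and (c) in the inductive step: Lemma \ref{lem:GP} produces the injectivity of the exponential-modified map only as a perturbation on a fixed compact bordered surface and, being a generic perturbation argument, provides no direct control of the behaviour of the resulting embedding on the boundary annulus where (b) was arranged. We resolve this by insisting that (b) be arranged with a safety margin ($>j+2$ on the arcs, so that after a small $\Cscr^1(K_{j+1})$ perturbation from Lemma \ref{lem:GP} the estimate $>j$ is preserved on the whole annulus $K_{j+1}\setminus\mathring K_j$), and by choosing $\Gamma$ dense enough that uniform continuity transfers the largeness from the arcs to a neighborhood. A secondary subtlety is that the later perturbations $F_k$ ($k>j+1$) must not spoil (b) or (c) on $K_j\setminus\mathring K_{j-1}$; this is handled by choosing the summable sequence $\{\epsilon_k\}$ a posteriori small enough at each stage, a standard maneuver.
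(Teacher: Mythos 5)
Your overall architecture (exhaustion by Runge domains with simple extensions, recursive approximation, Lemma \ref{lem:GP} at each stage to make $(F_{j,1},\ldots,e^{F_{j,2n+1}})$ injective, safety margins, and a limit) coincides with the paper's. The gap is in the inductive step, precisely at the point where the properness estimate must be established on the whole annular region $K_{j+1}\setminus\mathring K_j$ rather than just on a one-dimensional skeleton. You propose to arrange $\max\{|X_1|,-\Re Z\}>j+2$ on a family of arcs $\Gamma$ that is ``sufficiently dense'' in $K_{j+1}\setminus\mathring K_j$, and then to propagate the bound to the full region by ``equicontinuity of $F_{j+1}$ together with the density of $\Gamma$.'' This is circular: the modulus of continuity of $F_{j+1}$ depends on $F_{j+1}$, which is produced by the Mergelyan approximation \emph{after} $\Gamma$ has been chosen, and Theorem \ref{th:Mergelyan} gives closeness to the generalized Legendrian curve only on $S=K_j\cup\Gamma$, with no a priori gradient bound off $S$. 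A holomorphic map can be close to prescribed large values on an arbitrarily dense one-dimensional skeleton and still fail the bound in between; there is no uniform family from which to extract equicontinuity. This is exactly the difficulty the paper's Lemma \ref{lem:main-SL2C} is built to overcome: it decomposes $K'\setminus\mathring K$ into regions $\overline\Omega_j$, obtains the bound on $\overline{\Omega_j\setminus\Upsilon_j}$ by genuine continuity of an \emph{already fixed} holomorphic map (the disks $\Upsilon_j$ are chosen \emph{after} that map), and then runs a second deformation on the $\Upsilon_j$ in which the component already known to be large ($X$ on $I_X$, $Z$ on $I_Z$) is held \emph{identically fixed} while the other component is made large, using \cite[Lemma 4.3]{AlarconForstnericLopez2016Legendrian}. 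Without some device of this kind your inductive step does not close.

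A secondary, fixable, flaw: your condition (b) requires $\max\{|F_{j,1}|,-\Re F_{j,2n+1}\}>j$ on the closed annulus $K_j\setminus\mathring K_{j-1}$ and $>j+1$ on $K_{j+1}\setminus\mathring K_j$; both sets contain $bK_j$, and since $F_{j+1}$ is an $\epsilon_{j+1}$-perturbation of $F_j$ on $K_j$, the two requirements are incompatible whenever the maximum lies in $(j,j+1]$ somewhere on $bK_j$. The paper avoids this with a two-tier condition: $>j-1$ on the annulus and $>j$ on the outer boundary $bK_j$, so that the next stage only needs to preserve $>j$ near $bK_j$ and push to $>j+1$ on the new outer boundary.
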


%
%

\begin{proof}[Proof of Theorem \ref{th:intro-SL2C} assuming Theorem \ref{th:main-SL2C}]
Let $M$ be an open Riemann surface. By Theorem \ref{th:main-SL2C} there is a holomorphic Legendrian embedding $F=(X,Y,Z)\colon M\hra \c^3$ such that 
\begin{enumerate}[\rm \it i)]
\item $\max\{|X|,-\Re Z\}\colon M\to\r_+$ is a proper map and
\item $(X,Y,e^Z)\colon M\to\c^3$ is one-to-one.
\end{enumerate}
Consider the holomorphic Legendrian immersion
\[
	\Ygot\circ F=\left(
	\begin{array}{cc}
	e^{-Z} & Xe^Z
	\\
	Ye^{-Z} & (1+XY)e^Z
	\end{array}\right)\colon M\to SL_2(\c),
\]
where $\Ygot\colon\c^3\to SL_2(\c)$ is the map \eqref{eq:Ygot}. It trivially follows from property {\it ii)} that $\Ygot\circ F$ is one-to-one, and hence, since proper injective immersions $M\hra SL_2(\c)$ are embeddings, to finish the proof it suffices to show that $\Ygot\circ F\colon M\hra SL_2(\c)$ is a proper map. Indeed, pick a divergent sequence $\{p_j\}_{j\in\n}$ in $M$ and let us check that $\{\Ygot(F(p_j))\}_{j\in\n}$ diverges in $SL_2(\c)\subset\c^4$; equivalently,
\begin{equation}\label{eq:divergent}
	\lim_{j\to\infty} |\Ygot(F(p_j))|_1=+\infty,
\end{equation}
where
\[
	|\Ygot(F(p))|_1 =  e^{-\Re Z(p)}(1+|Y(p)|)+e^{\Re Z(p)}(|X(p)|+|1+X(p)Y(p)|),\quad p\in M.
\]
In view of property {\it i)} we may assume that either $\lim_{j\to\infty} |X(p_j)|=+\infty$ or $\lim_{j\to\infty} \Re Z(p_j)=-\infty$; let us distinguish cases.

\smallskip

\noindent{\em Case 1. Assume that $\lim_{j\to\infty}\Re Z(p_j)=-\infty$.} It follows that
\[
	+\infty=\lim_{j\to\infty} e^{-\Re Z(p_j)}\leq \lim_{j\to\infty} |\Ygot(F(p_j))|_1,
\] 
which proves \eqref{eq:divergent}.

\smallskip

\noindent{\em Case 2. Assume that $\lim_{j\to\infty} |X(p_j)|=+\infty$.} We reason by contradiction and, up to passing to a subsequence, assume that $\{|\Ygot(F(p_j))|_1\}_{j\in\n}$ is a bounded sequence. It turns out that $\{e^{\Re Z(p_j)}|X(p_j)|\}_{j\in\n}$ is also bounded, and hence, since we are assuming that $\lim_{j\to\infty} |X(p_j)|=+\infty$, we infer that $\lim_{j\to\infty} e^{\Re Z(p_j)}=0$; equivalently, $\lim_{j\to\infty} \Re Z(p_j)=-\infty$. This reduces the proof to Case 1, and hence concludes the proof of the theorem.
\end{proof}

Theorem \ref{th:main-SL2C} will follow from a standard recursive application of Lemma \ref{lem:GP} and the following approximation result, which is the kernel of this section.

%
%
\begin{lemma}\label{lem:main-SL2C}
Let $M$ be an open Riemann surface, $\emptyset\neq K\Subset K'\subset M$ be smoothly bounded, Runge compact domains such that the Euler characteristic $\chi(K'\setminus\mathring K)\in\{-1,0\}$, $F=(F_1,F_2,\ldots,F_{2n+1})\colon K\to\c^{2n+1}$ be a Legendrian curve of class $\Ascr^1(K)$, 
$\rho>0$ be a positive number, and assume that
\begin{equation}\label{eq:lem-SL2C}
	\max\{|F_1|,-\Re F_{2n+1}\}>\rho\quad \text{everywhere on $bK$.}
\end{equation}
Then $F$ may be approximated in the $\Cscr^1(K)$-topology by Legendrian curves $\wt F=(\wt F_1,\wt F_2,\ldots,\wt F_{2n+1})\colon K'\to \c^{2n+1}$ of class $\Ascr^1(K')$ satisfying the following conditions:
\begin{enumerate}[\rm (i)]
\item $\max\{|\wt F_1|,-\Re\wt F_{2n+1}\}>\rho$ everywhere on $K'\setminus\mathring K$.
\item $\max\{|\wt F_1|,-\Re\wt F_{2n+1}\}>\rho+1$ everywhere on $bK'$.
\end{enumerate}
\end{lemma}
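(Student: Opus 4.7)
The plan is to prove Lemma \ref{lem:main-SL2C} by distinguishing the two topological cases $\chi(K' \setminus \mathring K) \in \{0, -1\}$. The critical case ($\chi = -1$) reduces to the noncritical one by a preliminary Mergelyan step across a single handle-attaching arc $\alpha \subset K' \setminus \mathring K$, along which $F$ is extended to a generalized Legendrian curve with $\max\{|F_1|, -\Re F_{2n+1}\} > \rho$ maintained (integrating the Legendrian ODE $dF_{2n+1} = -\sum_{j=1}^n F_{2j-1} dF_{2j}$ after freely prescribing $F_1, \ldots, F_{2n}$); Theorem \ref{th:Mergelyan} then yields a Legendrian curve on a compact intermediate domain $K'' \subset K'$ with $\chi(K' \setminus \mathring K'') = 0$, to which the noncritical case applies.

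For the noncritical case, $K' \setminus \mathring K$ is a finite disjoint union of closed annuli; I treat one such annulus $A$ at a time, with inner boundary $C_0 := A \cap bK$ and outer boundary $C_1 := A \cap bK'$. Using the strict inequality $\max\{|F_1|, -\Re F_{2n+1}\} > \rho$ on $C_0$, I cyclically partition $C_0$ into finitely many closed arcs, each of \emph{type $X$} (with $|F_1| > \rho + \delta$ there) or \emph{type $Z$} (with $-\Re F_{2n+1} > \rho + \delta$ there) for some uniform $\delta > 0$; at the finitely many transition points both inequalities hold by continuity. Through these transition points I attach pairwise disjoint smooth embedded arcs crossing $A$ and, together with pieces of $C_1$, assemble them into a $1$-skeleton $\Gamma$ such that $S := K \cup \Gamma$ is an admissible Runge subset of $M$ in the sense of Def.\ \ref{def:admissible}, with $\Gamma$ covering $C_1$ and subdividing $A$ into topological disk ``cells'', each inheriting a uniform type $X$ or $Z$ from its inner boundary on $C_0$.

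I extend $F$ along $\Gamma$ to a generalized Legendrian curve $\hat F$ on $S$ by freely prescribing $\hat F_1, \ldots, \hat F_{2n}$ on each arc (matching $F$ at the attachment points with $K$) and recovering $\hat F_{2n+1}$ by integrating the contact ODE, subject to the global compatibility $\oint d\hat F_{2n+1} = 0$ around each cycle of $\Gamma$. On arcs bordering a type-$X$ cell I arrange $|\hat F_1| > \rho + 2$, additionally designing $\hat F_1$ on the $C_1$-segments so that the winding number of $\hat F_1|_{\partial D}$ around $0$ vanishes for each type-$X$ cell $D$ (possible due to the freedom in choosing $\hat F_1$ on the $C_1$-segments). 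On arcs bordering a type-$Z$ cell I arrange $-\Re \hat F_{2n+1} > \rho + 2$. On the transition arcs, both $|\hat F_1| > \rho$ and $-\Re \hat F_{2n+1} > \rho$ are preserved by keeping $\hat F_2, \ldots, \hat F_{2n}$ (near) constant there, which forces $\hat F_{2n+1}$ (near) constant at its transition-point value. Theorem \ref{th:Mergelyan} then approximates $\hat F$ in the $\Cscr^1(S)$-topology by a holomorphic Legendrian curve $\wt F \colon M \to \c^{2n+1}$.

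The final step is propagating this $1$-dimensional control on $S$ to the $2$-dimensional condition (i) on $K' \setminus \mathring K$. On a type-$Z$ cell $D$, $-\Re \wt F_{2n+1}$ is harmonic on $D$ and $> \rho$ on $\partial D \subset S$ by closeness, so the minimum principle yields $-\Re \wt F_{2n+1} > \rho$ throughout $D$. On a type-$X$ cell $D$, the winding-zero property of $\hat F_1$ on $\partial D$ is stable under $\Cscr^0$-small perturbations, so by the argument principle $\wt F_1$ has no zeros on $\overline D$; then $\log|\wt F_1|$ is harmonic on $D$ and the minimum principle propagates the strict bound $|\wt F_1| > \rho$ from $\partial D$ to $D$. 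Condition (ii) on $bK'$ follows because $C_1 \subset \Gamma$ and the designed strict bound $> \rho + 2$ for $\hat F$ passes to $> \rho + 1$ for $\wt F$ under small perturbation. I expect the main obstacle to be the coordinated global design of $\hat F$ on $\Gamma$: simultaneously satisfying the contact ODE, the initial data matching with $F$ at attachment points, the magnitude bounds on each arc (respecting its type), the cycle compatibility around $C_1$, and---most delicately---the winding-zero condition that enables the $2$-dimensional propagation for type-$X$ cells.
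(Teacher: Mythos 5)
Your overall architecture (reduce $\chi=-1$ to $\chi=0$ by a preliminary arc plus Mergelyan, split $bK$ into type-$X$ and type-$Z$ arcs, attach crossing arcs through the transition points, extend to a generalized Legendrian curve, invoke Theorem \ref{th:Mergelyan}) coincides with the paper's. Your mechanism for passing from the $1$-dimensional skeleton to condition (i) on the $2$-dimensional cells is, however, genuinely different: you put all of $bK'$ into the skeleton, control $\hat F$ on the \emph{entire} boundary of each cell, and propagate inward by the minimum principle, with a winding-number normalization to exclude zeros of $\wt F_1$ on type-$X$ cells. The paper never places $bK'$ in the skeleton; it obtains the bound on a collar of the skeleton by continuity after the first Mergelyan step, and then runs two further deformations (via \cite[Lemma 4.3]{AlarconForstnericLopez2016Legendrian}, which fixes one coordinate while modifying the others) on the residual inner disks $\Upsilon_j$, forcing the \emph{complementary} quantity ($-\Re\wt Z_1$ on type-$X$ disks, $|\wt X_2|$ on type-$Z$ disks) above $\rho+1$ there.

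There is a genuine gap which explains why the paper takes the longer route: your set $S=K\cup\Gamma$ with $\Gamma\supset bK'$ is not Runge. Since $\Gamma$ covers all of $C_1=bK'$ and, together with $bK$ and the crossing arcs, bounds each open cell $D$, every such $D$ is a relatively compact connected component of $M\setminus S$. Theorem \ref{th:Mergelyan} requires $S$ Runge, and this hypothesis is not removable here: if $\wt F_1$ is holomorphic on a neighborhood of $\overline D$ and uniformly close to $\hat F_1$ on $\partial D$, then $\hat F_1|_{\partial D}$ must approximately satisfy all moment conditions $\oint_{\partial D}\hat F_1\, z^k\,dz=0$ ($k\ge 0$, in a local coordinate), which freely designed boundary data on the $C_1$-segments will not. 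Leaving a gap in each $\beta_j$ restores Runge-ness but destroys the full-boundary control that the minimum principle (and condition (ii)) needs on that gap; so the approximation step and the propagation step cannot both be achieved in your scheme as stated. Two secondary, fixable defects: first, your $\Gamma$ is a branched graph (crossing arcs meeting $C_1$ at interior points), not a disjoint union of arcs and closed curves, so $S$ is not admissible in the sense of Def.\ \ref{def:admissible} and Theorem \ref{th:Mergelyan} does not apply as stated; second, at the junctions $q_j\in bK'$ your near-constant extension along a transition arc gives only the bounds $>\rho$ there, which is incompatible with demanding $>\rho+2$ at $q_j$ on the adjacent $C_1$-segments --- the crossing arcs must ramp both $|\hat F_1|$ and $-\Re\hat F_{2n+1}$ above $\rho+1$ before reaching $bK'$, as the paper arranges in its conditions (b1)--(b2).
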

\begin{proof}
For simplicity of exposition we assume that $n=1$; the same proof applies in general. Write $F=(X,Y,Z)\colon K\to\c^3$. We distinguish cases.

\smallskip

\noindent{\em Case 1: Assume that $\chi(K'\setminus\mathring K)=0$.} In this case $K'\setminus \mathring K$ consists of finitely many pairwise disjoint compact annuli. Again for simplicity of exposition we assume that $K$ (and so $K'$) has a single boundary component, and hence $K'\setminus \mathring K$ is connected (an annulus); otherwise we would reason analogously on each connected component of $K'\setminus \mathring K$. 

By inequality \eqref{eq:lem-SL2C} there are an integer $m\ge 3$ and arcs $\alpha_j\subset bK$, $j\in\z_m=\z/m\z=\{0,\ldots,m-1\}$, meeting the following requirements:
\begin{enumerate}[\rm ({a}1)]
\item $\bigcup_{j\in\z_m}\alpha_j=bK$.
\item $\alpha_j\cap\alpha_{j+1}$ consists of a single point $p_j$ and $\alpha_j\cap\alpha_k=\emptyset$ for all $k\in\z_m\setminus\{j-1,j,j+1\}$, $j\in\z_m$.
\item There are disjoint subsets $I_X$ and $I_Z$ of $\z_m$ such that $I_X\cup I_Z=\z_m$, $|X|>\rho$ everywhere on $\alpha_j$ for all $j\in I_X$, and $-\Re Z>\rho$ everywhere on $\alpha_j$ for all $j\in I_Z$. 
\end{enumerate}

Let us now take a family of pairwise disjoint smooth Jordan arcs $\gamma_j\subset K'\setminus \mathring K$, $j\in\z_m$, having an endpoint $p_j\in bK$ and the other endpoint $q_j\in bK'$ and being otherwise disjoint from $bK\cup bK'$. We choose such arcs so that the compact set
\[
	S:=K\cup\big(\bigcup_{j\in\z_m} \gamma_j\big)\subset K'
\]
is admissible in $M$ in the sense of Def. \ref{def:admissible}. It follows that $\mathring K'\setminus S$ consists of $m$ pairwise disjoint disks; we denote by $\Omega_j$ the one whose closure contains $\alpha_j$, and by $\beta_j$ the arc $bK'\cap\overline\Omega_j$. (See Figure \ref{fig:Omegaj}.)
Thus, 
\begin{equation}\label{eq:bK'}
	K'\setminus\mathring K=\bigcup_{j\in\z_m}\overline \Omega_j,\quad bK'=\bigcup_{j\in\z_m}\beta_j,
\end{equation}
and
\[
	b\Omega_j=\overline\Omega_j\setminus\Omega_j=
	\gamma_{j-1}\cup\alpha_j\cup\gamma_j\cup\beta_j,\quad j\in\z_m.
\]
\begin{figure}[ht]
    \begin{center}
    \scalebox{0.12}{\includegraphics{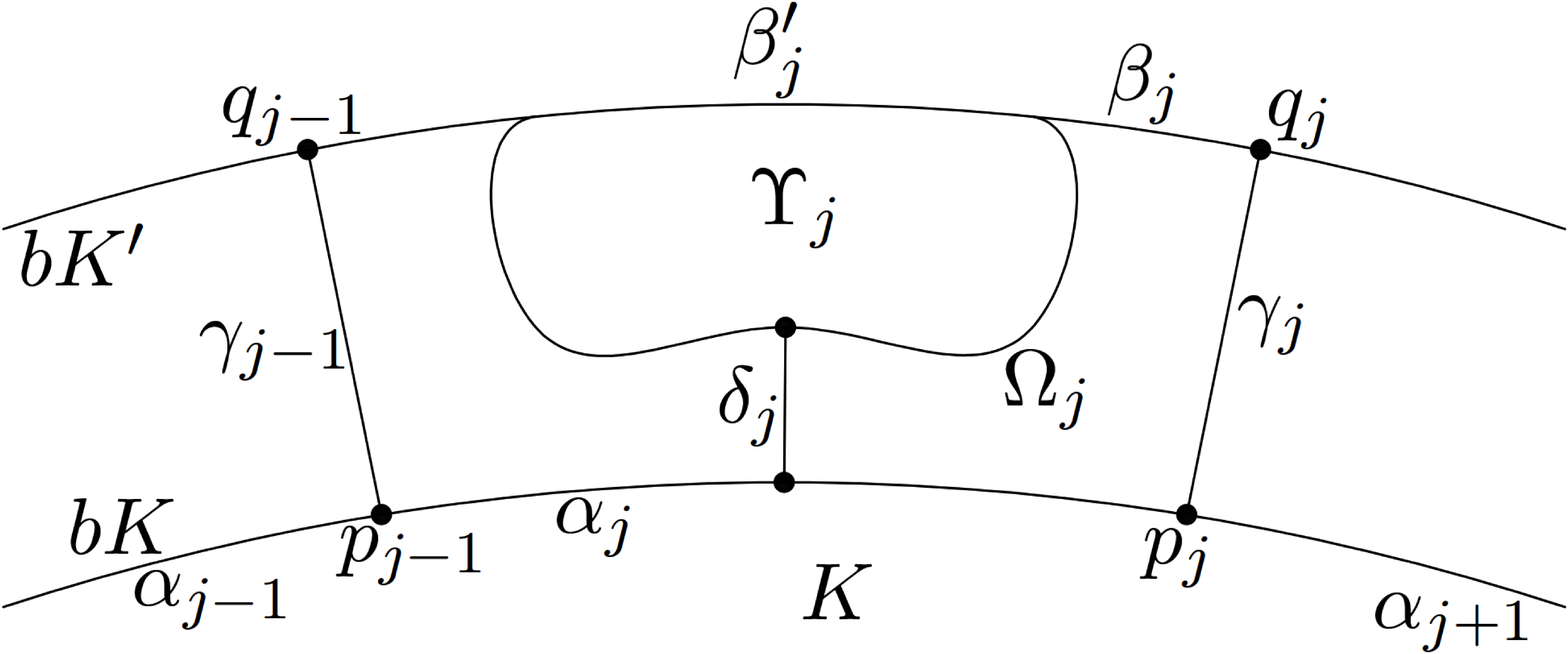}}
        \end{center}
\caption{$K'\setminus\mathring K$.}\label{fig:Omegaj}
\end{figure}

Since every compact path $[0,1]\to\c^3$ may be uniformly approximated by Legendrian paths (cf.\ \cite[Theorem A.6]{AlarconForstnericLopez2016Legendrian}), property {\rm (a3)} enables us to extend $F=(X,Y,Z)$, with the same name, to a generalized Legendrian curve $S\to\c^3$ (see Def.\ \ref{def:generalized}) satisfying:
\begin{enumerate}[\rm ({b}1)]
\item $|X|>\rho$ everywhere on $\gamma_{j-1}\cup\alpha_j\cup\gamma_j $ and $|X|>\rho+1$ at $q_{j-1}$ and $q_j$ for all $j\in I_X$.
\item $-\Re Z>\rho$ everywhere on $\gamma_{j-1}\cup\alpha_j\cup\gamma_j $ and $-\Re Z>\rho+1$ at $q_{j-1}$ and $q_j$ for all $j\in I_Z$.
\end{enumerate}

Next, by Theorem \ref{th:Mergelyan}, we may approximate $F$ in the $\Cscr^1(S)$-topology by Legendrian curves $K'\to\c^3$ of class $\Ascr^1(K')$ having no constant component function. We still denote by $F=(X,Y,Z)$ to a such approximating curve and assume that the approximation is close enough so that properties {\rm (b1)} and {\rm (b2)} remain to hold. Thus, by continuity of $F$, for each $j\in\z_m$ there is a smoothly bounded, closed disk 
\[
	\Upsilon_j\subset\overline\Omega_j\setminus (\gamma_{j-1}\cup\alpha_j\cup\gamma_j )= 
	\Omega_j\cup(\beta_j\setminus\{q_{j-1},q_j\})
\] 
such that:
\begin{enumerate}[\rm ({c}1)]
\item $\beta_j':=\Upsilon_j\cap\beta_j\neq\emptyset$ is an arc contained in the relative interior of $\beta_j$.
\item $|X|>\rho$ everywhere on $\overline{\Omega_j\setminus\Upsilon_j}$ and $|X|>\rho+1$ everywhere on $\overline{\beta_j\setminus\beta_j'}$ for all $j\in I_X$.
\item $-\Re Z>\rho$ everywhere on $\overline{\Omega_j\setminus\Upsilon_j}$ and $-\Re Z>\rho+1$ everywhere on $\overline{\beta_j\setminus\beta_j'}$ for all $j\in I_Z$.
\end{enumerate}
See Figure \ref{fig:Omegaj}.

Let us now assume that $I_X\neq\emptyset$; otherwise $I_Z=\z_m$ and the following deformation procedure is not required. For each $j\in I_X$ choose an arc $\delta_j\subset\overline{\Omega_j\setminus\Upsilon_j}$ having an endpoint in $\Upsilon_j\setminus\beta_j$ and the other endpoint in the relative interior of $\alpha_j$, and being otherwise disjoint from $\Upsilon_j\cup b\Omega_j$. (See Figure \ref{fig:Omegaj}.) Further, we may choose the arcs $\delta_j$, $j\in\z_m$, so that
\[
	S_X:=S\cup\big(\bigcup_{j\in I_Z}\overline\Omega_j\big)\cup
	\big(\bigcup_{j\in I_X}\delta_j\cup\Upsilon_j\big)
\] 
is admissible in $M$ and, taking into account that $F$ is of class $\Ascr^1(K')$ and has no constant component function, $X$, $Y$, and $Z$ have neither zeros nor critical points in $\bigcup_{j\in I_X}\delta_j$. Let $G_1=(X_1,Y_1,Z_1)\colon S_X\to\c^3$ be a generalized Legendrian curve enjoying the following properties:
\begin{enumerate}[\rm ({d}1)]
\item $G_1=F$ everywhere on $S\cup\big(\bigcup_{j\in I_Z}\overline\Omega_j\big)$.
\item $X_1=X$ everywhere on $S_X$.
\item $-\Re Z_1>\rho+1$ everywhere on $\bigcup_{j\in I_X}\Upsilon_j$.
\end{enumerate}
Such a generalized Legendrian curve can be constructed as follows. Set $X_1:=X|_{S_X}$, hence {\rm (d2)} holds, and choose any map $Z_1\colon S_X\to\c$ of class $\Ascr^1(S_X)$ meeting the following requirements:
\begin{enumerate}[\it i)]
\item $Z_1=Z$ everywhere on $S\cup\big(\bigcup_{j\in I_Z}\overline\Omega_j\big)$.
\item $-\Re Z_1>\rho+1$ everywhere on $\bigcup_{j\in I_X}\Upsilon_j$.
\item $dZ_1$ (cf.\ \eqref{eq:df}) has no zeros on $\bigcup_{j\in I_X}\delta_j$ and its zeros on $\bigcup_{j\in I_X}\Upsilon_j$ are those of $X_1$, with the same order.
\end{enumerate}
Such a function trivially exists; to ensure {\it iii)} recall that $X_1=X$ and $Z$ have neither zeros nor critical points in $\bigcup_{j\in I_X}\delta_j$. Now fix a point $u_0\in \mathring K$ and define $Y_1\colon S_X\to\c$ by
\[
	S_X\ni u\longmapsto Y_1(u)=\left\{
	\begin{array}{ll}
	Y(u) & \text{for all $u\in S\cup\big(\bigcup_{j\in I_Z}\overline\Omega_j\big)$}
	\\
	\displaystyle Y(u_0)-\int_{u_0}^u\frac{dZ_1}{X_1} & 
	\text{for all $u\in \bigcup_{j\in I_X}\delta_j\cup\Upsilon_j$}.
	\end{array}\right.
\]
By {\it i)}, {\rm (d2)}, {\it iii)}, and the facts that $F=(X,Y,Z)\colon K'\to\c^3$ is Legendrian, that $S_X$ is a strong deformation retract of $K'$, that $X$ vanishes nowhere on $\bigcup_{j\in I_X}\delta_j$, and that $\delta_j\cup\Upsilon_j$ is simply-connected for all $j\in I_X$, we infer that $Y_1\colon S_X\to\c$ is a well-defined function of class $\Ascr^1(S_X)$ and, taking also {\it ii)} into account, the map $G_1:=(X_1,Y_1,Z_1)\colon S_X\to\c^3$ is a generalized Legendrian curve satisfying conditions {\rm (d1)} and {\rm (d3)}.

In view of {\rm (d2)} we have that $X_1=X$ is nonconstant and of class $\Ascr^1(K')$, and so \cite[Lemma 4.3]{AlarconForstnericLopez2016Legendrian} guarantees that $G_1$ may be approximated in the $\Cscr^1(S_X)$-topology by Legendrian curves $\wt G_1=(\wt X_1,\wt Y_1,\wt Z_1)\colon K'\to\c^3$ of class $\Ascr^1(K')$ having no constant component function and with 
\begin{equation}\label{eq:wtX1=X}
	\wt X_1=X.
\end{equation}
Further, in view of {\rm (c3)}, {\rm (d1)}, and {\rm (d3)}, we may choose such an approximation $\wt G_1$ of $G_1$ so that:
\begin{enumerate}[\rm ({e}1)]
\item $-\Re\wt Z_1>\rho$ everywhere on $\bigcup_{j\in I_Z} \overline{\Omega_j\setminus\Upsilon_j}$.
\item $-\Re\wt Z_1>\rho+1$ everywhere on $\bigcup_{j\in I_X} \Upsilon_j$ and on $\bigcup_{j\in I_Z} \overline{\beta_j\setminus\beta_j'}$ (for the latter take into account that $\beta_j\subset \overline\Omega_j$ for all $j\in\z_m\supset I_Z$).
\end{enumerate}

Assume for a moment that $I_Z=\emptyset$ and let us show that $\wt F:=\wt G_1$ solves the lemma. Indeed, in this case $I_X=\z_m$ and hence, in view of \eqref{eq:bK'},
\begin{equation}\label{eq:IZ=empty}
	K'\setminus\mathring K=\bigcup_{j\in I_X}\overline\Omega_j=\bigcup_{j\in I_X} 
	\Upsilon_j\cup\overline{\Omega_j\setminus\Upsilon_j},\quad
	bK'=\bigcup_{j\in I_X}\beta_j\subset \bigcup_{j\in I_X}\Upsilon_j\cup \overline{\beta_j\setminus\beta_j'}.
\end{equation}
On the other hand, \eqref{eq:wtX1=X}, {\rm (c2)}, and {\rm (e2)} ensure that
\begin{equation}\label{eq:max}
	\max\{|\wt X_1|,-\Re\wt Z_1\}>\rho\quad  \text{everywhere on }\bigcup_{j\in I_X} 
	\Upsilon_j\cup\overline{\Omega_j\setminus\Upsilon_j} 
\end{equation}
and
\begin{equation}\label{eq:max1}
	\max\{|\wt X_1|,-\Re\wt Z_1\}>\rho+1\quad \text{everywhere on }\bigcup_{j\in I_X}\Upsilon_j\cup \overline{\beta_j\setminus\beta_j'}.
\end{equation}
Thus, \eqref{eq:IZ=empty}, \eqref{eq:max}, and \eqref{eq:max1} guarantee conditions {\rm (i)} and {\rm (ii)} in the statement of the lemma. Moreover, if the approximation of $G_1$ by $\wt G_1$ is close enough in the $\Cscr^1(S_X)$-norm, property {\rm (d1)} and the fact that $K\subset S\subset S_X$ enable us to assume that $\wt G_1$ is as close as desired to $F$ in the $\Cscr^1(K)$-topology. This would conclude the proof of the lemma in case $I_Z=\emptyset$.

Assume now that $I_Z\neq\emptyset$. Analogously to what has been done in the previous deformation procedure, for each $j\in I_Z$ we choose an arc $\delta_j\subset\overline{\Omega_j\setminus\Upsilon_j}$ having an endpoint in $\Upsilon_j\setminus\beta_j$ and the other endpoint in the relative interior of $\alpha_j$, being otherwise disjoint from $\Upsilon_j\cup b\Omega_j$, and such that the set
\[
	S_Z:=S\cup\big(\bigcup_{j\in I_X}\overline\Omega_j\big)\cup
	\big(\bigcup_{j\in I_Z}\delta_j\cup\Upsilon_j\big)
\] 
is admissible in $M$ and the functions $\wt X_1$, $\wt Y_1$, and $\wt Z_1$ have neither zeros nor critical points in $\bigcup_{j\in I_Z}\delta_j$; see Figure \ref{fig:Omegaj}. For the latter, recall that the concerned functions are nonconstant and of class $\Ascr^1(K')$. Let $G_2=(X_2,Y_2,Z_2)\colon S_Z\to\c^3$ be a generalized Legendrian curve satisfying the following properties:
\begin{enumerate}[\rm ({f}1)]
\item $G_2=\wt G_1$ everywhere on $S\cup\big(\bigcup_{j\in I_X}\overline\Omega_j\big)$.
\item $Z_2=\wt Z_1$ everywhere on $S_Z$.
\item $|X_2|>\rho+1$ everywhere on $\bigcup_{j\in I_Z}\Upsilon_j$.
\item $X_2$ and $Y_2$ are nonconstant on $\Upsilon_j$, $X_2$ has no zeros in $\delta_j$, and $Y_2$ has no critical points in $\delta_j$ for all $j\in I_Z$. 
\end{enumerate}
Such may be constructed as follows. Set $Z_2:=\wt Z_1|_{S_Z}$; this implies {\rm (f2)}. Choose any map $X_2\colon S_Z\to\c$ of class $\Ascr^1(S_Z)$ satisfying the following conditions:
\begin{enumerate}[\it I)]
\item $X_2=\wt X_1$ everywhere on $S\cup\big(\bigcup_{j\in I_X}\overline \Omega_j\big)$.
\item $|X_2|>\rho+1$ everywhere on $\bigcup_{j\in I_Z}\Upsilon_j$ and $X_2$ is nonconstant on $\Upsilon_j$ for all $j\in I_Z$.
\item $X_2$ vanishes nowhere on $\bigcup_{j\in I_Z}\delta_j$.
\end{enumerate}
Existence of such a function is clear; recall that $\wt X_1$ does not vanish anywhere on $\bigcup_{j\in I_Z}\delta_j$. Fix a point $u_0\in\mathring K$ and define $Y_2\colon S_Z\to\c$ by
\[
	S_Z\ni u\longmapsto Y_2(u)=\left\{
	\begin{array}{ll}
	\wt Y_1(u) & \text{for all $u\in S\cup\big(\bigcup_{j\in I_X}\overline\Omega_j\big)$}
	\\
	\displaystyle \wt Y_1(u_0)-\int_{u_0}^u\frac{dZ_2}{X_2} & 
	\text{for all $u\in \bigcup_{j\in I_Z}\delta_j\cup\Upsilon_j$}.
	\end{array}\right.
\]
In view of {\rm (f2)}, {\it I)}, {\it II)}, {\it III)}, the facts that $\wt G_1$ is a Legendrian curve of class $\Ascr^1(K')$, that $S_Z$ is a strong deformation retract of $K'$, and that $\delta_j\cup\Upsilon_j$ is simply-connected for all $j\in I_Z$, imply that $Y_2$ is a well-defined map of class $\Ascr^1(S_Z)$ (observe that $X_2$ vanishes nowhere on $\bigcup_{j\in I_Z}\Upsilon_j$ since $\rho>0$) and $G_2:=(X_2,Y_2,Z_2)\colon S_Z\to\c^3$ is a generalized Legendrian curve satisfying properties {\rm (f1)} and {\rm (f3)}. Finally, since $\wt Z_1$ has no critical points in $\delta_j$ and is nonconstant on $\Upsilon_j$ for all $j\in I_Z$, properties {\it II)}, {\it III)}, and {\rm (f2)}  guarantee {\rm (f4)}. 

Now we may apply \cite[Lemma 4.3]{AlarconForstnericLopez2016Legendrian} to $G_2$ inferring that it may be approximated in the $\Cscr^1(S_Z)$-topology by Legendrian curves $\wt F=(\wt X,\wt Y,\wt Z)\colon K'\to\c^3$ of class $\Ascr^1(K')$ having no constant component function and satisfying
\begin{equation}\label{eq:wtZ=wtZ1}
	\wt Z=\wt Z_1.
\end{equation} 

We claim that a close enough such approximation $\wt F$ of $G_2$ satisfies the conclusion of the lemma. Indeed, since $K\subset S\subset S_X\cap S_Z$ then, by {\rm (d1)}, {\rm (f1)}, and choosing $\wt G_1$ close enough to $G_1$ in the $\Cscr^1(S_X)$-norm and $\wt F$ close enough to $G_2$ in the $\Cscr^1(S_Z)$-norm, we may choose $\wt F$ to be as close as desired to $F$ in the $\Cscr^1(K)$-norm. On the other hand, {\rm (e1)}, the second part of {\rm (e2)}, {\rm (f3)}, and \eqref{eq:wtZ=wtZ1} give that
\begin{equation}\label{eq:max2}
	\max\{|\wt X|,-\Re\wt Z\}>\rho\quad  \text{everywhere on }\bigcup_{j\in I_Z} 
	\Upsilon_j\cup\overline{\Omega_j\setminus\Upsilon_j} 
\end{equation}
and
\begin{equation}\label{eq:max3}
	\max\{|\wt X|,-\Re\wt Z\}>\rho+1\quad \text{everywhere on }\bigcup_{j\in I_Z}\Upsilon_j\cup \overline{\beta_j\setminus\beta_j'},
\end{equation}
provided that $\wt X$ is chosen close enough to $X_2$ uniformly on $S_Z\supset \bigcup_{j\in I_Z} \Upsilon_j$. Finally, since $\z_m=I_X\cup I_Z$,
\[
	K'\setminus\mathring K=\bigcup_{j\in \z_m}\overline\Omega_j=\bigcup_{j\in \z_m} 
	\Upsilon_j\cup\overline{\Omega_j\setminus\Upsilon_j},\quad\text{and}\quad
	bK'=\bigcup_{j\in \z_m}\beta_j\subset \bigcup_{j\in \z_m}\Upsilon_j\cup \overline{\beta_j\setminus\beta_j'},
\]
properties \eqref{eq:max}, \eqref{eq:max1}, {\rm (f1)}, \eqref{eq:wtZ=wtZ1}, \eqref{eq:max2}, and \eqref{eq:max3} guarantee conditions {\rm (i)} and {\rm (ii)} in the statement of the lemma, whenever that the approximation of $X_2$ by $\wt X$ on the set $S_Z\supset \bigcup_{j\in I_X}\overline\Omega_j=\bigcup_{j\in I_X} \Upsilon_j\cup\overline{\Omega_j\setminus\Upsilon_j}$ is sufficiently close. This concludes the proof in the case when the Euler characteristic $\chi(K'\setminus\mathring K)=0$.

\smallskip

\noindent{\em Case 2: Assume that $\chi(K'\setminus\mathring K)=-1$}. In this case there is a Jordan arc $\gamma\subset \mathring K'\setminus\mathring K$  such that the two endpoints of $\gamma$ lie in $bK$ and $\gamma$ is otherwise disjoint from $K$, and $S:=K\cup\gamma\Subset K'$ is Runge and admissible in $M$ (in the sense of Def.\ \ref{def:admissible}) and a strong deformation retract of $K'$. Since every compact path in $\c^3$ may be uniformly approximated by Legendrian paths (see \cite[Theorem A.6]{AlarconForstnericLopez2016Legendrian}), inequality \eqref{eq:lem-SL2C} enables us to extend $F$, with the same name, to a generalized Legendrian curve $S\to\c^3$ such that
\begin{equation}\label{eq:lemma-chi-1}
	\max\{|X|,-\Re Z\}>\rho\quad \text{everywhere on $bK\cup\gamma$}.
\end{equation}

Now, by Theorem \ref{th:Mergelyan}, we may approximate $F$ uniformly in the $\Cscr^1(S)$-topology by holomorphic Legendrian curves $F_1=(X_1,Y_1,Z_1)\colon M\to \c^3$. Since $S$ is a strong deformation retract of $K'$ then, if the approximation of $F$ by $F_1$ is close enough, \eqref{eq:lemma-chi-1} ensures the existence of a smoothly bounded Runge compact domain $K''$ such that $S\Subset K''\Subset K'$, the Euler characteristic $\chi(K'\setminus\mathring K'')=0$, and $\max\{|X_1|,-\Re Z_1\}>\rho$ everywhere on $K''\setminus\mathring K$. This reduces the proof to Case 1, and hence concludes the proof of the lemma.
\end{proof}

To finish the proof of Theorem \ref{th:intro-SL2C}, we now prove the main result of this section.
%
%
\begin{proof}[Proof of Theorem \ref{th:main-SL2C}]
For simplicity of exposition we assume that $n=1$ and write $F=(X,Y,Z)$; the same proof applies in general. By approximation, we may also assume in view of Theorem \ref{th:Mergelyan} that $F$ extends, with the same name, to a holomorphic Legendrian embedding on an open neighborhood of $K$ having no constant component functions. Thus, up to slightly enlarging $K$ if necessary, we may assume that $X$ does not vanish anywhere on $bK$, and hence there is a number $\rho_0>0$ such that
\begin{equation}\label{eq:basis}
	\max\{|X|,-\Re Z\}>\rho_0\quad \text{everywhere on $bK$}.
\end{equation}

Let
\[
	K_0:=K\Subset K_1\Subset K_2\Subset \cdots \Subset \bigcup_{j\in\z_+} K_j=M
\]
be an exhaustion of $M$ by smoothly bounded, Runge compact domains such that the Euler characteristic $\chi(K_j\setminus \mathring K_{j-1})\in\{-1,0\}$ for all $j\in \n$. The existence of such an exhaustion is well known; see for instance \cite[Lemma 4.2]{AlarconLopez2013JGA} for a simple proof.

Set $F_0:=F$. For any sequence of positive numbers $\{\epsilon_j\}_{j\in\n}\searrow 0$, a standard recursive application of Lemma \ref{lem:main-SL2C} provides a sequence of Legendrian curves $\{F_j=(X_j,Y_j,Z_j)\colon K_j\to\c^3\}_{j\in\n}$ of class $\Ascr^1(K_j)$ such that the following conditions hold for all $j\in\n$:
\begin{enumerate}[\rm (a)]
\item $\|F_j-F_{j-1}\|_{1,K_{j-1}}<\epsilon_j$.
\item $\max\{|X_j|,-\Re Z_j\}>j-1$ everywhere on $K_j\setminus\mathring K_{j-1}$.
\item $\max\{|X_j|,-\Re Z_j\}>j$ everywhere on $bK_j$.
\end{enumerate}
Furthermore, by Lemma \ref{lem:GP} we may also assume that
\begin{enumerate}[\rm (a)]
\item[\rm (d)] $(X_j,Y_j,e^{Z_j})\colon K_j\to\c^3$ is one-to-one for all $j\in\n$.
\end{enumerate}

Thus, choosing the number $\epsilon_j>0$ small enough at each step in the recursive construction, {\rm (a)} and {\rm (d)} ensure that the sequence $\{F_j\}_{j\in\n}$ converges uniformly on compact subsets of $M$ to a holomorphic Legendrian immersion $\wt F=(\wt X,\wt Y,\wt Z)\colon M\to \c^3$ which is as close as desired to $F_0=F$ in the $\Cscr^1$-norm on $K_0=K$ and such that the holomorphic map $(\wt X,\wt Y,e^{\wt Z})\colon M\to \c^2\times\c_*\subset\c^3$ is one-to-one. It follows that $\wt F\colon M\to\c^3$ is one-to-one as well. Moreover, if each $\epsilon_j>0$ is chosen sufficiently small, condition {\rm (b)} guarantees that $\max\{|\wt X|,-\Re\wt Z\}\colon M\to\r_+$ is a proper map, and hence $\wt F$ is an embedding. This concludes the proof.
\end{proof}


\subsection*{Acknowledgements}
The author is supported by the Ram\'on y Cajal program of the Spanish Ministry of Economy and Competitiveness and partially supported by the MINECO/FEDER grant no. MTM2014-52368-P, Spain.

I thank Franc Forstneri\v c and Antonio Mart\'inez for helpful suggestions which led to improvement of the paper.




\vspace*{0.3cm}
\noindent Antonio Alarc\'{o}n

\noindent Departamento de Geometr\'{\i}a y Topolog\'{\i}a e Instituto de Matem\'aticas (IEMath-GR), Universidad de Granada, Campus de Fuentenueva s/n, E--18071 Granada, Spain.

\noindent  e-mail: {\tt alarcon@ugr.es}

\end{document}